\newlength{\querylen}
\theoremstyle{plain}
\newtheorem{theorem}{Theorem}[section]
\newtheorem{lemma}[theorem]{Lemma}
\newtheorem{corollary}[theorem]{Corollary}
\theoremstyle{definition}
\newtheorem{definition}{Definition}[section]
\theoremstyle{remark}
\newtheorem{example}{Example}[section]
\newcommand{\sA}{\mathcal{A}}
\newcommand{\sB}{\mathcal{B}}
\newcommand{\sC}{\mathcal{C}}
\newcommand{\sF}{\mathcal{F}}
\newcommand{\B}{\mathbb{B}}
\newcommand{\K}{\mathbb{K}}
\newcommand{\R}{\mathbb{R}}
\renewcommand{\S}{\mathbb{S}}
\renewcommand{\d}{\mathrm{d}} 
\newcommand{\neutral}{\mathbf{e}}
\newcommand{\zero}{\mathbf{0}}
\newcommand{\eps}{\varepsilon}
\DeclareMathOperator{\interior}{int}
\DeclareMathOperator{\closure}{cl}
\newcommand{\I}[1]{\mathbf{1}_{#1}}
\renewcommand{\P}{\mathbf{P}}
\newcommand{\E}{\mathbf{E}}
\newcommand{\cE}[2]{\mathbf{E}(#1 \,|\, #2)}
\DeclareMathOperator{\Var}{Var}
\newcommand{\iid}{i.\,i.\,d.\ }
\newcommand{\norm}[1]{\|#1\|}
\begin{document}

 \title{Large deviations for heavy-tailed random elements in convex
  cones}

\author{Christoph Kopp\thanks{Partially supported by Swiss National
    Foundation Grant Nr. 200021-126503, e-mail:
    \texttt{chis.kopp@stat.unibe.ch}} \ and Ilya
    Molchanov\thanks{e-mail: \texttt{ilya.molchanov@stat.unibe.ch}}}

\date{\small{Institute of Mathematical Statistics and Actuarial Science,\\
  University of Bern, Sidlerstrasse 5, 3012 Bern, Switzerland}}

\maketitle
\begin{abstract}
  We prove large deviation results for sums of heavy-tailed random
  elements in rather general convex cones being semigroups equipped
  with a rescaling operation by positive real numbers. In difference
  to previous results for the cone of convex sets, our technique does
  not use the embedding of cones in linear spaces. Examples include
  the cone of convex sets with the Minkowski addition,
  positive half-line with maximum operation and the family of square
  integrable functions with arithmetic addition and argument
  rescaling.
\end{abstract}

\noindent\emph{Keywords:} large deviation; convex cone; heavy-tailed distribution;
  embedding; random set

\noindent \emph{AMS Classifications}:  60B15;  60F10; 60D05

\section{Introduction}

Most results concerning limiting behavior of sums of random elements
in linear spaces can be extended for random closed sets in linear
spaces, see \cite[Ch.~3]{mo1}. The sum of sets is defined in the
Minkowski sense, i.\,e. the sum of two sets is the closure of the set
of pairwise sums of elements from these sets. It is well known that
this addition is not invertible. The most typical way to handle this
setting is to consider first random convex compact sets and embed them
into the Banach space of continuous functions on the unit sphere in
the dual space using the support function. Then the Minkowski sum of
sets corresponds to the arithmetic sum of their support functions and
the Hausdorff distance between sets turns into the uniform distance in
the space of support functions, which opens the possibility to use the
results available for random elements in Banach spaces, see e.\,g.
\cite{gin:h:z}. Finally, it is usually argued that the results for
possibly non-convex random compact sets are identical to their convex
case counterparts in view of the convexification property of the
Minkowski addition, see \cite{art:han85}.

The family of limit theorems for random sets has been recently
extended with several large deviation results in the heavy-tail
setting in \cite{MPSa} and \cite{MPSb}.  The crucial assumption is the
regular variation condition on the tail, which is similar to one that
appears in limit theorems for unions of random closed sets, see
\cite[Ch.~4]{mo1}. Let $S_n$ denote the Minkowski sum of \iid
regularly varying random compact sets $\xi_1,\dots,\xi_n$ in $\R^m$
with tail index $\alpha>0$ and tail measure $\mu$. In particular,
\cite{MPSa} show that
\begin{equation}
  \label{eq:mps1}
  \gamma_n\P(S_n\in\lambda_n U) \to \mu(U)
\end{equation}
for $\lambda_n$ that grows sufficiently fast and all $\mu$-continuous
measurable subsets $U$ of the family of all compact sets bounded away
from zero. The sequence of normalising constants $\{\gamma_n\}$ is
related to the tail behavior of the norm of a single random compact
set defined as its Hausdorff distance to the origin. Especially, it is
required that $\lambda_n/n\to\infty$ in case $\alpha\geq 1$.

This large deviation result has been refined in \cite{MPSb}, where it
is shown that, for regularly varying convex random compact sets with
integrable norm (so $\alpha\geq 1$),
\begin{equation}
  \label{eq:mps2}
  \gamma_n \P(S_n\in\lambda_n U +\,n \E \xi_1)\to\mu(U)\,,
\end{equation}
where $\lambda_n$ grows slower than in \eqref{eq:mps1} and $\E\xi_1$
is the expectation of $\xi_1$, see \cite[Sec.~2.1]{mo1}.  The method
of the proof is based on the embedding argument combined with a use of
classical large deviation results from \cite{PVV} and \cite{hul:lin:mik:05}.

The setting of random compact sets can be considered as a special case
of random elements in convex cones (also called conlinear spaces),
being semigroups with a scaling operation by positive reals, see
\cite{DMZ}. A simple example is the cone of positive numbers with the
maximum operation. It should be noted that in that case the embedding
argument is not applicable any longer, so one has to prove the
corresponding results in the cone without using any centering or
symmetrization arguments.

In this paper, we generalize the above mentioned results from
\cite{MPSa} and \cite{MPSb} for heavy-tailed random elements in convex
cones. While the general scheme of our proofs follows the lines of the
proofs from \cite{MPSa} and \cite{MPSb}, it requires extra care caused
by the impossibility to use the embedding device. In particular, this
concerns our generalization of \eqref{eq:mps2}, since there is no
generally consistent definition of the expectation in convex cones.

\section{Regularly varying random elements in cones}
\label{sec:general}

A Borel function $f:(c,\infty)\mapsto(0,\infty)$ for some $c>0$ is
said to be \emph{regularly varying} (at infinity) with index $\rho$ if
\begin{displaymath}
  f(\lambda x)/f(x) \to \lambda^{\rho} \quad\text{as } x\to\infty
\end{displaymath}
for all $\lambda>0$, see e.\,g. \cite{BGT}.  If $\rho=0$, then $f$ is
called \emph{slowly varying} and usually denoted by the letter $\ell$
instead of $f$. Any regularly varying function $f$ with index $\rho$
has a representation $f(x) = x^{\rho}\ell(x)$ for a slowly varying
function $\ell$.  We write $f\sim g$ as a shorthand for $f(x)/g(x)\to
1$ as $x\to\infty$.

\begin{theorem}[Karamata, see Th.~1.5.11 \cite{BGT}]
  If $f$ is regularly varying with index $\rho$ and locally bounded on
  $[a,\infty)$, then
  \begin{enumerate}
  \item[(i)] for any $\beta\geq - (\rho+1),$
    \begin{displaymath}
      \lim_{x\to\infty}\frac{x^{\beta+1}f(x)}{\int_a^x t^{\beta}f(t)\d
        t} = \beta +\rho +1;
    \end{displaymath}
  \item[(ii)] for any $\beta < - (\rho+1)$ (and for $\beta=-(\rho+1)$ if
    $\int_a^{\infty} t^{-(\rho+1)}f(t)\,\d t <\infty$),
    \begin{displaymath}
      \lim_{x\to\infty}\frac{x^{\beta+1}f(x)}{\int_x^{\infty} t^{\beta}f(t)\d
        t} = -(\beta +\rho +1).
    \end{displaymath}
  \end{enumerate}
\end{theorem}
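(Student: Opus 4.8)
The plan is to treat both parts through the same device: in each integral substitute $t=xs$, so that the ratio to be evaluated becomes (the reciprocal of) $\int s^{\beta}\,f(xs)/f(x)\,\d s$ over a \emph{fixed} range of $s$, and then pass to the limit under the integral sign using the pointwise convergence $f(xs)/f(x)\to s^{\rho}$ together with a domination uniform in $x$. The two standard facts from the elementary theory of regular variation that I would invoke (available in Ch.~1 of \cite{BGT}, so there is no circularity with the Karamata theorem itself) are: the uniform convergence theorem, $f(\lambda x)/f(x)\to\lambda^{\rho}$ uniformly for $\lambda$ in compact subsets of $(0,\infty)$; and the Potter bounds, namely that for every $\delta>0$ and $A>1$ there is an $X$ with $f(y)/f(x)\le A\max\{(y/x)^{\rho+\delta},(y/x)^{\rho-\delta}\}$ whenever $x,y\ge X$. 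I will also use the representation $f(x)=x^{\rho}\ell(x)$ recalled above and the fact that a regularly varying function of positive index tends to infinity.

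For part~(i) with $\beta>-(\rho+1)$ I would fix $M$ larger than the Potter threshold and split $\int_a^x t^{\beta}f(t)\,\d t=C_M+\int_M^x t^{\beta}f(t)\,\d t$, the constant $C_M:=\int_a^M t^{\beta}f(t)\,\d t$ being finite because $f$ is locally bounded on $[a,M]$. Substituting $t=xs$ in the second integral gives $x^{\beta+1}f(x)\int_{M/x}^{1}s^{\beta}\,\frac{f(xs)}{f(x)}\,\d s$; on $[M/x,1]$ the Potter bound yields $s^{\beta}f(xs)/f(x)\le A\,s^{\beta+\rho-\delta}$, which is integrable on $(0,1)$ as soon as $\delta<\beta+\rho+1$, and the integrand converges pointwise to $s^{\beta+\rho}$, so dominated convergence gives $\int_{M/x}^{1}s^{\beta}f(xs)/f(x)\,\d s\to\int_0^1 s^{\beta+\rho}\,\d s=(\beta+\rho+1)^{-1}$. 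Since $x^{\beta+1}f(x)=x^{\beta+\rho+1}\ell(x)\to\infty$, the $C_M$ term is asymptotically negligible in the ratio, and dividing through gives $x^{\beta+1}f(x)\big/\int_a^x t^{\beta}f(t)\,\d t\to\beta+\rho+1$.

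Part~(ii) with $\beta<-(\rho+1)$ is the mirror image. The substitution gives immediately $\int_x^{\infty}t^{\beta}f(t)\,\d t=x^{\beta+1}f(x)\int_1^{\infty}s^{\beta}\,\frac{f(xs)}{f(x)}\,\d s$, where now both the finiteness of the integral (for $x$ past the threshold) and the domination follow from the Potter bound $s^{\beta}f(xs)/f(x)\le A\,s^{\beta+\rho+\delta}$ for $s\ge1$, integrable on $(1,\infty)$ once $\delta<-(\beta+\rho+1)$. Dominated convergence then produces $\int_1^{\infty}s^{\beta+\rho}\,\d s=-(\beta+\rho+1)^{-1}$, so the ratio tends to $-(\beta+\rho+1)$.

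The one genuinely delicate point --- and the place where the scheme above must be replaced by a direct estimate --- is the borderline exponent $\beta=-(\rho+1)$, where the would-be limiting integrand $s^{-1}$ is not integrable at the critical endpoint and the claimed limit is $0$. Here $x^{\beta+1}f(x)=\ell(x)$ and $\int_a^x t^{\beta}f(t)\,\d t=\int_a^x \ell(t)/t\,\d t$ (respectively the analogous tail integral for (ii)), and I would bound the denominator from below via the \emph{refined} Potter bound with $A$ taken arbitrarily close to $1$: for small $\delta$ and $M$ beyond the corresponding threshold, $\ell(t)\ge A^{-1}(t/x)^{\delta}\ell(x)$ for $t\in[M,x]$ gives $\int_M^x \ell(t)/t\,\d t\ge A^{-1}\ell(x)\,x^{-\delta}\int_M^x t^{\delta-1}\,\d t=(A\delta)^{-1}\ell(x)\bigl(1-(M/x)^{\delta}\bigr)$, hence $\limsup_{x\to\infty}\ell(x)\big/\int_a^x \ell(t)/t\,\d t\le A\delta$; letting $A\downarrow1$ and $\delta\downarrow0$ yields the limit $0$. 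The tail version (under the stated convergence hypothesis) is identical, using $\int_x^{\infty}(x/t)^{\delta}\,\d t/t=\delta^{-1}$. I expect this borderline case, together with the bookkeeping needed to keep the domination uniform in $x$ right up to the singular endpoint, to be the main obstacle; the remaining steps are routine.
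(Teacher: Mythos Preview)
The paper does not prove this statement; Karamata's theorem is quoted verbatim from \cite{BGT} as a background tool (there is no proof environment following the theorem, and the paper moves directly to Corollary~\ref{cor:trunc.moment}). Your argument is correct and is, in essence, the standard proof given in \cite[Ch.~1]{BGT}: the substitution $t=xs$ converts the ratio into an integral of $s^{\beta}f(xs)/f(x)$ over a fixed interval, the Potter bounds supply an integrable dominant uniformly in $x$, and dominated convergence handles the non-critical exponents, while the borderline $\beta=-(\rho+1)$ is treated by a direct lower bound on $\int \ell(t)/t\,\d t$ using the two-sided Potter inequality with the free constant sent to zero. One small presentational remark: in part~(i) the domain $[M/x,1]$ varies with $x$, so strictly speaking you are applying dominated convergence to $s\mapsto \I{[M/x,1]}(s)\,s^{\beta}f(xs)/f(x)$ on $(0,1]$; this is exactly what you intend, but it may be worth making the indicator explicit.
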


Below we summarize several concepts from \cite{DMZ} concerning general
convex cones.  A \emph{convex cone} $\K$ is a topological semigroup
with neutral element $\neutral$ and an extra operation $x\mapsto ax$
of scaling $x\in\K$ by a positive number $a$, so that $a(x+y) = ax +
ay$ for all $a>0,x,y\in\K$. It should be noted that we do \emph{not}
require the validity of the second distributivity law $(a+b)x = ax +
bx$. The second distributivity law holds for the cone of compact sets
in $\R^d$ with the Minkowski addition and enables using the embedding
argument.

We assume that $\K$ is a pointed cone, i.\,e. $ax$ converges to the
cone element $\zero$ called the origin as $a\downarrow0$ for all
$x\neq\neutral$. Assume that $\K$ is metrized by a homogeneous metric
$d$, i.\,e. $d(ax,ay)=ad(x,y)$ for all $x,y\in\K$ and $a>0$.  The
value $\norm{x}=d(x,\zero)$ is called the \emph{norm} of $x$ which in
general constitutes an abuse of language since $\norm{\cdot}$ is not
necessarily sub-linear. Nevertheless, the norm is sub-linear if the
metric is \emph{sub-invariant}, i.\,e. if $d(x+h,x)\leq
d(h,\zero)=\norm{h}$ for all $x,h\in\K$. A stronger assumption is the
translation-invariance of the metric meaning that $d(x+h,y+h)=d(x,y)$
for all $x,y,h\in\K$. In a cone with sub-invariant metric,
$\zero=\neutral$, see \cite[Lemma~2.7]{DMZ}.

In the following, $\S = \{x\in\K: \norm{x}=1\}$ denotes
the unit sphere. For $\eps>0$,
\begin{displaymath}
  A^{\eps} = \{x\in\K: d(x,A)\leq \eps\}
\end{displaymath}
is the $\eps$-envelope of $A\subset\K$, where $d(x,A) = \inf_{a\in
  A}d(x,a)$. The Borel $\sigma$-algebra on $\K$ is denoted by $\sB$
and used to define random cone elements $\xi$ as measurable maps from
a probability space $(\Omega,\sF,\P)$ to $(\K,\sB)$.

Furthermore, $\interior A$, $\closure A$ and $\partial A$ denote the
interior, closure and boundary of $A\subset\K$. A set $A\subset\K$ is
said to be bounded away from a point $x\in\K$ if $x\not\in\closure A$.
If $\mu$ is a measure on $\sB$, then $A\in\sB$ is called a
$\mu$-continuity set if $\mu(\partial A) = 0$.

Let $M_{\zero}$ be the space of all Borel measures on
$\K'=\K\setminus\{\zero\}$ taking finite values on $\K\setminus
\{x\in\K:\norm{x}\geq r\}$ for each $r>0$. By $\sC_0$ we denote the
class of all real-valued bounded continuous functions on $\K'$ with
support bounded away from $\zero$.  A sequence $\{\mu_n,n\geq 1\}$ of
measures from $M_{\zero}$ is said to converge to $\mu\in M_{\zero}$ if
\begin{displaymath}
  \int f\d\mu_n\to \int f\d\mu\quad\text{as } n\to\infty
\end{displaymath}
for all $f\in\sC_0$, equivalently $\mu_n(U)\to\mu(U)$ for all
$\mu$-continuity sets $U\in\sB$ bounded away from $\zero$. 

The following definition does not rely on the semigroup operation and
is available for random elements in metric spaces, where a scaling by
positive real numbers is defined.

\begin{definition}[see \cite{HL}]
  \label{def:regvar}
  A random cone element $\xi$ is called \emph{regularly varying} (at
  infinity) if there exist a non-null measure $\mu\in M_{\zero}$ and a
  sequence $\{a_n, n\geq 1\}$ of positive numbers such that
  \begin{displaymath}
    n\P(\xi \in a_n \,\cdot)\to\mu(\cdot) \quad \text{in } M_{\zero}
    \quad \text{as }n\to\infty.
  \end{displaymath}
\end{definition}

The \emph{tail measure} $\mu$ necessarily scales like a power
function, i.\,e. $\mu(\lambda U) = \lambda^{-\alpha} \mu(U)$ for every
$\lambda>0$, all $\mu$-continuous $U$ bounded away from $\zero$ and
$\alpha>0$ called the \emph{index} of regular variation of
$\xi$.

By \cite[Th.~3.1]{HL}, regular variation of $\xi$ implies
\begin{equation}
  \label{eq:reg2}
  \frac{\P(\xi \in t\, \cdot)}{\P(\norm{\xi} > t)} \to c\mu(\cdot) \text{
    in } M_{\zero} \text{ as } t\to\infty
\end{equation}
for some $c>0$. It will subsequently be assumed that $c=1$ in
(\ref{eq:reg2}), which is possible by scaling
$\{a_n\}$.

By \cite[Th.~3.1]{HL}, $\xi$ is regularly varying with index
$\alpha>0$ if and only if there exist a finite measure $\sigma$
(called the \emph{spectral measure}) on the
unit sphere $\S$ and a sequence $\{\tilde{a}_n\}$ such that
\begin{displaymath}
  \lim_{n\to\infty}n\P(\norm{\xi}^{-1} \xi\in B, \norm{\xi} >
  r \tilde{a}_n) = \sigma(B)\, r^{-\alpha}
\end{displaymath}
for all $r>0$ and all Borel $B\subset\S$ with $\sigma(\partial B)=0$.
It holds that $\tilde{a}_n \sim a_n$.

Karamata's theorem implies the following result. 

\begin{corollary}
  \label{cor:trunc.moment}
  Let $\xi$ be regularly varying with index $\alpha>0$ and let $T>0$
  and $\gamma>\alpha$. Then
  \begin{displaymath}
    \E(\norm{\xi}\I{\norm{\xi}\leq T})^{\gamma} =
    \gamma\int_{0}^{T}\!\!\P(\norm{\xi}>t)\,t^{\gamma-1}\d t \sim
    c\,T^{\gamma}\P(\norm{\xi}>T) \quad \text{as } T\to \infty,
  \end{displaymath}
  where $c>0$ denotes a finite constant.
\end{corollary}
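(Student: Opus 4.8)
The plan is to prove the stated chain in two independent pieces: first the exact identity
\begin{displaymath}
  \E(\norm{\xi}\I{\norm{\xi}\leq T})^{\gamma}
    = \gamma\int_0^T \P(\norm{\xi}>t)\, t^{\gamma-1}\,\d t,
\end{displaymath}
and then the asymptotic equivalence of the right-hand side with $c\,T^{\gamma}\P(\norm{\xi}>T)$ as $T\to\infty$. For the first step I would write $(\norm{\xi}\I{\norm{\xi}\leq T})^{\gamma} = \int_0^{\infty}\gamma t^{\gamma-1}\I{t< \norm{\xi}\leq T}\,\d t$ (valid pointwise, the integrand vanishing unless $\norm{\xi}\leq T$), take expectations, and apply Tonelli's theorem to interchange $\E$ and $\int_0^T$; this produces $\gamma\int_0^T t^{\gamma-1}\P(\norm{\xi}>t)\,\d t$ exactly, with no tail or integrability hypothesis needed beyond the trivial boundedness by $T^\gamma$.

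For the second step, the key observation is that $\xi$ is regularly varying with index $\alpha>0$, so by \eqref{eq:reg2} (equivalently by the definition and the scaling property of $\mu$) the function $g(t) = \P(\norm{\xi}>t)$ is regularly varying at infinity with index $-\alpha$: indeed $\P(\norm{\xi}>t) = \P(\xi\in t\{\norm{\cdot}>1\})$ and the set $\{x:\norm{x}>1\}$ is bounded away from $\zero$, so $g(\lambda t)/g(t)\to \mu(\{\norm{\cdot}>\lambda\})/\mu(\{\norm{\cdot}>1\}) = \lambda^{-\alpha}$. I would then invoke Karamata's theorem (part (i) of the cited theorem) with $f = g$, $\rho = -\alpha$ and $\beta = \gamma-1$: since $\gamma>\alpha$ we have $\beta = \gamma-1 > \alpha-1 = -(\rho+1)$, so the hypothesis $\beta\geq-(\rho+1)$ is met strictly, giving
\begin{displaymath}
  \frac{T^{\gamma}\,g(T)}{\int_0^T t^{\gamma-1}g(t)\,\d t}\ \longrightarrow\ \beta+\rho+1 = \gamma-\alpha > 0.
\end{displaymath}
Multiplying through by $\gamma$, this says precisely that $\gamma\int_0^T t^{\gamma-1}\P(\norm{\xi}>t)\,\d t \sim \frac{\gamma}{\gamma-\alpha}\,T^{\gamma}\P(\norm{\xi}>T)$, so the claim holds with $c = \gamma/(\gamma-\alpha)$, a finite positive constant. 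One small bookkeeping point: Karamata's theorem as quoted requires $g$ locally bounded on $[a,\infty)$, which is immediate since $g$ is a non-increasing probability tail, bounded by $1$; and the lower integration limit $a$ versus $0$ is harmless because $\int_0^a t^{\gamma-1}g(t)\,\d t\leq \int_0^a t^{\gamma-1}\,\d t<\infty$ contributes a bounded term that does not affect the $\sim$-equivalence (the denominator tends to infinity, since $T^\gamma g(T)$ is itself regularly varying with positive index $\gamma-\alpha$).

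I do not anticipate a serious obstacle here; the main thing to get right is the verification that the tail $\P(\norm{\xi}>t)$ is genuinely regularly varying with index $-\alpha$ — this is where the regular-variation hypothesis on $\xi$ is used, via the relation \eqref{eq:reg2} together with $\mu(\lambda U)=\lambda^{-\alpha}\mu(U)$ applied to $U=\{\norm{\cdot}>1\}$ — and then the careful matching of the exponents so that the strict inequality $\gamma>\alpha$ lands us in case (i) rather than case (ii) of Karamata's theorem. Everything else (Tonelli, the negligibility of the integral over $[0,a]$, identifying $c$) is routine.
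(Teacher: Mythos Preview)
Your approach---applying Karamata's theorem (part~(i)) to the regularly varying tail $g(t)=\P(\norm{\xi}>t)$ with $\beta=\gamma-1$---is exactly what the paper intends (it offers no proof beyond the sentence ``Karamata's theorem implies the following result''), and your treatment of the asymptotic step, including the verification that $\gamma>\alpha$ places you strictly in case~(i), the identification $c=\gamma/(\gamma-\alpha)$, and the handling of the lower integration limit, is correct.

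There is one slip in the first step. After Tonelli you obtain $\gamma\int_0^T t^{\gamma-1}\P(t<\norm{\xi}\leq T)\,\d t$, not $\gamma\int_0^T t^{\gamma-1}\P(\norm{\xi}>t)\,\d t$; the two differ by exactly $T^{\gamma}\P(\norm{\xi}>T)$. Equivalently, the layer-cake formula $\gamma\int_0^T t^{\gamma-1}\P(Y>t)\,\d t$ computes $\E(Y\wedge T)^{\gamma}$, not $\E(Y\I{Y\leq T})^{\gamma}$. This imprecision is already present in the corollary as stated in the paper, so you have inherited rather than introduced it. It is harmless for the asymptotic conclusion: subtracting $T^{\gamma}\P(\norm{\xi}>T)$ from a quantity asymptotic to $\tfrac{\gamma}{\gamma-\alpha}T^{\gamma}\P(\norm{\xi}>T)$ still yields something asymptotic to $c\,T^{\gamma}\P(\norm{\xi}>T)$, now with $c=\alpha/(\gamma-\alpha)>0$. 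Since the corollary only asserts the existence of some finite positive $c$, the result stands; just be aware that the displayed ``$=$'' is not literally an equality.
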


The letter $c$ (also with subscripts) denotes finite, strictly
positive constants; its value may change at every occurrence.

\section{Large deviations with strong scaling}
\label{sec:large-devi-with}

Consider a sequence $\{\xi_n\}_{n\geq 1}$ of \iid random elements in
$\K'$ and their partial sums
\begin{displaymath}
  S_n=\xi_1+\cdots+\xi_n, \qquad n\geq 1.
\end{displaymath}

\begin{theorem}
  \label{thm:1}
  Let $\xi, \xi_1, \ldots$ be \iid regularly varying random elements
  with index $\alpha>0$, tail measure $\mu$ and normalizing sequence
  $\{a_n\}$ in a convex cone with sub-invariant homogeneous metric
  $d$. Let $\{\lambda_n, n\geq 1\}$ be a sequence such that
  \begin{enumerate}
    \renewcommand{\theenumi}{\roman{enumi}}
    \renewcommand{\labelenumi}{(\theenumi)}
  \item $\lambda_n/a_n\to\infty$ if $\alpha < 1$,
  \item $\lambda_n/n\to\infty$, \; $\lambda_n/a_n\to\infty$ and
    $(n/\lambda_n) \E( \norm{\xi}\I{\norm{\xi}\leq \lambda_n})\to 0$
    if $\alpha = 1$,
  \item $\lambda_n/n\to\infty$ if $\alpha > 1$.
  \end{enumerate}
  Then
  \begin{displaymath}
    \gamma_n\P(S_n \in \lambda_n \cdot) \to \mu(\cdot) \quad\text{ in }
    M_{\zero}\quad \text{ as } n\to\infty\,,
  \end{displaymath}
  where $\gamma_n=(n \P(\norm{\xi} > \lambda_n))^{-1}$.
\end{theorem}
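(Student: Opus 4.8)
The plan is to reduce the statement to testing against functions $f\in\sC_0$, and for such $f$ to decompose the event $\{S_n\in\lambda_n\,\cdot\}$ according to how many of the summands $\xi_1,\dots,\xi_n$ are ``large''. The guiding heuristic, standard in heavy-tail large deviations, is the \emph{single big jump} principle: the only way $S_n$ lands in a region bounded away from $\zero$ at scale $\lambda_n$ is that exactly one summand is of order $\lambda_n$ while the remaining $n-1$ contribute a negligible perturbation. So the main term should be $n\P(\xi\in\lambda_n\,\cdot)$, which by Definition~\ref{def:regvar} and the choice $\gamma_n=(n\P(\norm{\xi}>\lambda_n))^{-1}$ converges to $\mu(\cdot)$ after multiplication by $\gamma_n$, using \eqref{eq:reg2} with $c=1$.

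\textbf{Key steps.}
First I would fix $f\in\sC_0$ with $\mathrm{supp}\,f\subset\{\norm{x}\ge 2\delta\}$ for some $\delta>0$, and fix a truncation level: call a summand \emph{big} if $\norm{\xi_i}>\delta\lambda_n$ and \emph{small} otherwise. Split
\begin{displaymath}
  \E f(S_n/\lambda_n) = \sum_{k=0}^{n} \E\bigl[f(S_n/\lambda_n)\,;\,\text{exactly }k\text{ big summands}\bigr],
\end{displaymath}
and treat the three ranges $k=0$, $k=1$, $k\ge 2$ separately. For $k\ge 2$ a crude union bound gives a contribution of order $\binom{n}{2}\P(\norm{\xi}>\delta\lambda_n)^2\norm{f}_\infty$, which after multiplication by $\gamma_n$ is $O\bigl(n\P(\norm{\xi}>\delta\lambda_n)\bigr)=O(n\delta^{-\alpha}\P(\norm{\xi}>\lambda_n))\to 0$ since $\lambda_n/a_n\to\infty$ forces $n\P(\norm{\xi}>\lambda_n)\to0$. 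For the $k=1$ term, symmetry gives $n$ times the contribution where $\xi_1$ is big; I write $S_n=\xi_1+(S_n-\xi_1)$ and use sub-invariance of the metric, $d(\xi_1+h,\xi_1)\le\norm{h}$, so that $f(S_n/\lambda_n)$ differs from $f(\xi_1/\lambda_n)$ by at most the modulus of continuity of $f$ evaluated at $\norm{S_n-\xi_1}/\lambda_n$. Provided $\norm{S_n-\xi_1}/\lambda_n\to0$ in probability on the relevant event, this term converges to $\mu(f)$ after scaling, by regular variation of $\xi_1$ together with Corollary~\ref{cor:trunc.moment} controlling the truncated moments. For $k=0$ (all summands small) I must show $\gamma_n\P(S_n\in\lambda_n\mathrm{supp}\,f,\ \text{no big summand})\to0$; by sub-invariance $\norm{S_n}\le\sum_i\norm{\xi_i}$ when all $\xi_i$ are small, so this reduces to a one-dimensional large deviation estimate for sums of truncated real random variables, of the classical Fuk--Nagaev type, and it is exactly here that the three case hypotheses on $\lambda_n$ enter: for $\alpha<1$ a first-moment/Markov bound on the truncated sum suffices, while for $\alpha\ge1$ one centers and uses $\lambda_n/n\to\infty$ together with the vanishing truncated-mean condition in (ii).

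\textbf{Main obstacle.}
The hard part will be the small-summands bound ($k=0$) in the boundary case $\alpha=1$: one cannot afford either the pure first-moment bound (the truncated mean is not summably small without the extra hypothesis) or a naive second-moment bound, so the argument must exploit the precise condition $(n/\lambda_n)\E(\norm{\xi}\I{\norm{\xi}\le\lambda_n})\to0$ after an exponential Markov inequality applied to $\sum_i\norm{\xi_i}\I{\norm{\xi_i}\le\delta\lambda_n}$, estimating the log-moment-generating function via Corollary~\ref{cor:trunc.moment}; this is the step where the absence of a linear structure is most keenly felt, since all we have is the triangle inequality $\norm{S_n}\le\sum\norm{\xi_i}$ rather than an actual decomposition, so any cancellation must be extracted at the level of the real-valued norms. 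A secondary subtlety is that $f$ is only continuous, not Lipschitz, so the modulus-of-continuity argument in the $k=1$ term must be paired with a uniform-integrability/tightness step to push the error to zero; and since we work in $M_{\zero}$, one has to verify throughout that the approximating measures $\gamma_n\P(S_n\in\lambda_n\,\cdot)$ remain in $M_{\zero}$ (finite on $\{\norm{x}\ge r\}$), which again follows from the $k\ge2$ bound applied with $\mathrm{supp}\,f$ replaced by $\{\norm{x}\ge r\}$.
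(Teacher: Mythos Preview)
Your strategy is essentially the paper's: single-big-jump decomposition, Bonferroni for two or more large summands, regular variation for the main term, and a Fuk--Nagaev-type bound on the all-small event. The paper works with $\mu$-continuity sets $U$ and $\eps$-envelopes $U^{\pm\eps}$ rather than test functions (so your modulus-of-continuity complication simply disappears), and it records $\lambda_n^{-1}\norm{S_n}\to 0$ in probability as a separate lemma, which is exactly the input your $k=1$ step needs.

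One step fails, however. Your claim that ``for $\alpha<1$ a first-moment/Markov bound on the truncated sum suffices'' is wrong. With truncation level $\delta\lambda_n$ and deviation target $2\delta\lambda_n$ (you have coupled the two through $\mathrm{supp}\,f\subset\{\norm{x}\ge 2\delta\}$), Markov's inequality together with Corollary~\ref{cor:trunc.moment} gives
\begin{displaymath}
  \gamma_n\,\P\Bigl(\textstyle\sum_i \norm{\xi_i}\I{\norm{\xi_i}\le\delta\lambda_n}\ge 2\delta\lambda_n\Bigr)
  \le \frac{\E\bigl(\norm{\xi}\I{\norm{\xi}\le\delta\lambda_n}\bigr)}{2\delta\lambda_n\,\P(\norm{\xi}>\lambda_n)}
  \longrightarrow \tfrac{c}{2}\,\delta^{-\alpha},
\end{displaymath}
a nonzero constant. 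The paper instead applies Fuk--Nagaev \emph{uniformly in $\alpha$} after centering: the centering is harmless because $n\lambda_n^{-1}\E(\norm{\xi}\I{\norm{\xi}\le\delta\lambda_n})\to 0$ in all regimes (hypothesis~(ii) is used precisely here when $\alpha=1$), and the $p$-th-moment term then carries a factor $\delta^{p-\alpha}$ with $p>\max(2,\alpha)$, which vanishes as $\delta\downarrow 0$. So you must decouple the truncation parameter from the support radius of $f$ and send the former to zero at the end, and you need Fuk--Nagaev (or at least a $p$-th moment bound with $p>\alpha$) even for $\alpha<1$. The case $\alpha=1$ is not special at the Fuk--Nagaev stage; the extra hypothesis enters only to justify the centering, not through an exponential-Markov estimate of the log-moment-generating function.
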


The proof closely follows the lines of the proof of
\cite[Th.~1]{MPSa}. Note that the sequence $\{\lambda_n\}$ grows
faster than $n^{\max(1,1/\alpha)}$. 

\begin{lemma}
  \label{lem:sumconv}
  In the setting of Theorem~\ref{thm:1}, $\lambda_n^{-1}\norm{S_n} \to
  0$ in probability.
\end{lemma}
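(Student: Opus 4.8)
The plan is to estimate $\norm{S_n}$ via sub-additivity of the norm, which holds because the metric is sub-invariant: $\norm{x+y} = d(x+y,\zero) \le d(x+y,y) + d(y,\zero) \le \norm{x} + \norm{y}$. Hence $\norm{S_n} \le \sum_{i=1}^n \norm{\xi_i}$, and it suffices to show that $\lambda_n^{-1}\sum_{i=1}^n \norm{\xi_i} \to 0$ in probability. This reduces the statement to a one-dimensional weak law of large numbers for the non-negative random variables $\norm{\xi_i}$, which are i.\,i.\,d.\ and, since $\xi$ is regularly varying with index $\alpha$, have $\P(\norm{\xi}>t)$ regularly varying with index $-\alpha$. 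The key observation is that the three cases of Theorem~\ref{thm:1} are precisely the conditions needed to make a truncation argument work.

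The main step is a standard truncation at level $\lambda_n$. Write $\norm{\xi_i} = \norm{\xi_i}\I{\norm{\xi_i}\le\lambda_n} + \norm{\xi_i}\I{\norm{\xi_i}>\lambda_n}$. For the large part, $\P\bigl(\bigcup_{i=1}^n \{\norm{\xi_i}>\lambda_n\}\bigr) \le n\P(\norm{\xi}>\lambda_n) = \gamma_n^{-1} \to 0$, because $\lambda_n/a_n\to\infty$ forces $n\P(\norm{\xi}>\lambda_n)\to 0$ (using $n\P(\norm{\xi}>a_n)\to\mu(\S^{\ge 1})<\infty$ together with regular variation of the tail). So with probability tending to one, $\norm{S_n} \le \sum_{i=1}^n \norm{\xi_i}\I{\norm{\xi_i}\le\lambda_n}$, and it remains to control $\lambda_n^{-1}\E\sum_{i=1}^n \norm{\xi_i}\I{\norm{\xi_i}\le\lambda_n} = (n/\lambda_n)\E(\norm{\xi}\I{\norm{\xi}\le\lambda_n})$ via Markov's inequality. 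When $\alpha>1$, Corollary~\ref{cor:trunc.moment} (with $\gamma=1$) gives $\E(\norm{\xi}\I{\norm{\xi}\le\lambda_n}) \to \E\norm{\xi}<\infty$, so the expression is of order $n/\lambda_n \to 0$ by condition (iii). When $\alpha=1$, the vanishing of $(n/\lambda_n)\E(\norm{\xi}\I{\norm{\xi}\le\lambda_n})$ is exactly assumption (ii). When $\alpha<1$, Corollary~\ref{cor:trunc.moment} gives $\E(\norm{\xi}\I{\norm{\xi}\le\lambda_n}) \sim c\,\lambda_n\P(\norm{\xi}>\lambda_n)$, so $(n/\lambda_n)\E(\norm{\xi}\I{\norm{\xi}\le\lambda_n}) \sim c\,n\P(\norm{\xi}>\lambda_n) = c\,\gamma_n^{-1} \to 0$ as already noted; condition (i) is what makes $\gamma_n^{-1}\to 0$.

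Putting these together, for any $\eps>0$ we have $\P(\lambda_n^{-1}\norm{S_n}>\eps) \le n\P(\norm{\xi}>\lambda_n) + \eps^{-1}(n/\lambda_n)\E(\norm{\xi}\I{\norm{\xi}\le\lambda_n}) \to 0$ in all three cases, which proves the lemma. The only point requiring a little care is the passage from $\lambda_n/a_n\to\infty$ to $n\P(\norm{\xi}>\lambda_n)\to 0$: one uses that $t\mapsto\P(\norm{\xi}>t)$ is regularly varying with index $-\alpha<0$ (a consequence of Definition~\ref{def:regvar}) together with $n\P(\norm{\xi}>a_n)$ being bounded, so $n\P(\norm{\xi}>\lambda_n) = n\P(\norm{\xi}>a_n)\cdot\frac{\P(\norm{\xi}>\lambda_n)}{\P(\norm{\xi}>a_n)}$ and the ratio tends to $0$ by the uniform convergence theorem for regularly varying functions. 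I expect this estimate, rather than the truncation bookkeeping, to be the only genuinely non-mechanical part of the argument.
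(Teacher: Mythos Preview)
Your proof is correct and follows the same first step as the paper --- reducing to the one-dimensional statement $\lambda_n^{-1}\sum_{i=1}^n\norm{\xi_i}\to 0$ in probability via sub-additivity of the norm --- but then proceeds differently. The paper splits into cases: for $\alpha>1$ it invokes the strong law of large numbers, and for $0<\alpha\le 1$ it appeals to the three-condition degenerate convergence criterion \cite[Th.~4.13]{PVV}, which requires verifying $n\P(\norm{\xi}>\lambda_n)\to 0$, $(n/\lambda_n)\E(\norm{\xi}\I{\norm{\xi}<\lambda_n})\to 0$, and $(n/\lambda_n^2)\Var(\norm{\xi}\I{\norm{\xi}<\lambda_n})\to 0$. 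Your argument is more elementary and uniform: a single truncation at level $\lambda_n$ together with a first-moment Markov bound handles all three regimes at once, without citing an external WLLN criterion and without needing the variance condition. This is a genuine simplification.

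Two small points to clean up. First, your appeal to Corollary~\ref{cor:trunc.moment} ``with $\gamma=1$'' in the case $\alpha>1$ is illegitimate, since the corollary requires $\gamma>\alpha$; but the claim $\E(\norm{\xi}\I{\norm{\xi}\le\lambda_n})\le\E\norm{\xi}<\infty$ is immediate from integrability (regular variation with index $\alpha>1$ gives $\E\norm{\xi}^\beta<\infty$ for all $\beta<\alpha$), so just say that. Second, you use $\lambda_n/a_n\to\infty$ to get $n\P(\norm{\xi}>\lambda_n)\to 0$, but in case (iii) only $\lambda_n/n\to\infty$ is assumed; you should note that $a_n\sim n^{1/\alpha}\ell(n)=o(n)$ when $\alpha>1$, so $\lambda_n/a_n\to\infty$ follows.
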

\begin{proof}
  The sub-invariance of the metric implies the sub-linearity of the
  norm, thus it suffices to show that
  \begin{equation}
    \label{eq:subinvar}
    \frac{\norm{\xi_1}+\cdots+\norm{\xi_n}}{\lambda_n}
    \to 0 \text{ in probability}.
  \end{equation}
  If $\alpha>1$, then $\E\norm{\xi}<\infty$ and the strong law of
  large numbers with the growth conditions on $\{\lambda_n\}$ provides
  the result.

  Assume $0<\alpha\leq 1$. By \cite[Th.~4.13]{PVV},
  (\ref{eq:subinvar}) holds if (and only if) the following three
  conditions hold.
  \begin{description}
  \item[\rm{(i)}] $n\P(\norm{\xi}>\lambda_n)\to 0$. This is the case, since 
    \begin{displaymath}
      n\P(\norm{\xi}>\lambda_n) = n\P(\norm{\xi}>a_n)\;
      \frac{\P(\norm{\xi}>\lambda_n)}{\P(\norm{\xi}>a_n)},
    \end{displaymath}
    where the first factor converges to one and the fraction converges
    to zero.
  \item[\rm{(ii)}] $\lambda_n^{-1}
    n\E(\norm{\xi}\I{\norm{\xi}<\lambda_n})\to0$, which follows from
    Corollary~\ref{cor:trunc.moment} in case $\alpha<1$, while for
    $\alpha=1$ the convergence is assumed.
  \item[\rm{(iii)}] $\lambda_n^{-2} n\Var(\norm{\xi}
    \I{\norm{\xi}<\lambda_n})\to0$. To confirm this, bound the
    variance by the second moment and apply
    Corollary~\ref{cor:trunc.moment}.\qedhere
  \end{description}
\end{proof}

\begin{proof}[Proof of Theorem~\ref{thm:1}]
  Let $U\in\sB$ with $U\neq\emptyset$, $\mu(\partial U) = 0$ and $\zero \not\in \closure
  U$. We start by bounding $\gamma_n\P(S_n\in\lambda_n U)$ from
  above. For any $\eps>0$,
  \begin{align*}
    \P(S_n\in\lambda_n U)&=\P(S_n\in\lambda_n
    U,\mathop{\cup}_{i=1}^n\{\xi_i\in\lambda_n U^{\eps}\}) + \P(S_n\in\lambda_n
    U,\mathop{\cap}_{i=1}^n\{\xi_i\not\in\lambda_n U^{\eps}\})\\
    &\leq n\P(\xi_1\in\lambda_n U^{\eps}) + \P(\mathop{\cap}_{i=1}^n
    \{d(S_n,\xi_i)>\eps\lambda_n\})\\
    &=I_1+I_2,
  \end{align*}
  since $S_n\in\lambda_nU$ and $\xi_i\not\in\lambda_nU^{\eps}$ imply
  that $d(\lambda_n^{-1}S_n,\lambda_n^{-1}\xi_i)>\eps$ so that
  $d(S_n,\xi_i)>\eps \lambda_n$ by the homogeneity of $d$.

  By (\ref{eq:reg2}), $\gamma_nI_1\to \mu(U^{\eps})$ as $n\to\infty$
  for $\mu$-continuity sets $U^{\eps}$ with $\zero
  \not\in\closure U^{\eps} $. Note that $\zero \not\in\closure U^{\eps}$ for
  sufficiently small $\eps$ and that all but countably many
  $U^{\eps}$-sets are $\mu$-continuity sets since $\mu$ is finite
  outside any neighborhood of $\zero$. It follows that $\mu(U^{\eps})
  \to \mu(U)$ as $\eps \downarrow 0$.

  To show that $\gamma_nI_2\to 0$ as $n\to\infty$ for every $\eps>0$,
  consider for $\delta>0$ the following events partitioning the
  probability space:
  \begin{align*}
    D_1 &=\bigcup_{1\leq i<j\leq n}\{\norm{\xi_i}>\delta\lambda_n,
    \norm{\xi_j}>\delta\lambda_n\},\\
    D_2 &=\bigcup_{i=1}^n\left\{\norm{\xi_i}>\delta\lambda_n,
      \norm{\xi_j}\leq\delta\lambda_n,j\neq
      i,j=1,\ldots,n\right\},\\
    D_3 &=\left\{\max_{i=1,\ldots,n}\norm{\xi_i}\leq\delta\lambda_n\right\}.
  \end{align*}
  By the Bonferroni inequality and the independence of the $\xi_i$,
  \begin{displaymath}
    \gamma_n\P(D_1)\leq\binom{n}{2}
    \frac{\P(\norm{\xi_1}>\delta\lambda_n)^2}{n\P(\norm{\xi}>\lambda_n)},
  \end{displaymath}
  which converges to zero as $n\to\infty$ because of the regular
  variation property and the growth condition on $\{\lambda_n\}$.

  By the sub-invariance of the metric, $d(S_n,\xi_n)\leq
  \norm{S_{n-1}}$. Therefore,
  \begin{align*}
    \P(\mathop{\cap}_{i=1}^n \{d(S_n,\xi_i)>\eps\lambda_n\}, D_2) &\leq
    \sum_{i=1}^n
    \P(d(S_n,\xi_i)>\eps\lambda_n,\norm{\xi_i}>\delta\lambda_n)\\
    &\leq n\P(\norm{S_{n-1}}>\eps\lambda_n)\P(\norm{\xi_n}>\delta\lambda_n)
  \end{align*}
  which, if multiplied by $\gamma_n$, converges to zero as
  $n\to\infty$ because of the regular variation of $\xi$ and
  Lemma~\ref{lem:sumconv}.

  Regarding $D_3$, by sub-invariance
  \begin{displaymath}
    \P\left(\mathop{\cap}_{i=1}^n \{d(S_n,\xi_i)>\eps\lambda_n\}, 
      \max_{i=1,\ldots,n}\norm{\xi_i}\leq\delta\lambda_n\right) \leq\P\left(\sum_{i=1}^{n}\norm{\xi_i}\I{\norm{\xi_i}\leq\delta\lambda_n}
      >\eps\lambda_n\right),
  \end{displaymath}
  which, after centering, becomes 
  \begin{displaymath}
    \P\left(\sum_{i=1}^{n}\left(\norm{\xi_i}\I{\norm{\xi_i}\leq\delta\lambda_n}-
      \E(\norm{\xi_i}\I{\norm{\xi_i}\leq\delta\lambda_n})\right)>
      \eps\lambda_n-n\E(\norm{\xi_1}\I{\norm{\xi_1}\leq\delta\lambda_n})\right).
  \end{displaymath}
  Since
  $n\lambda_n^{-1}\E(\norm{\xi_1}\I{\norm{\xi_1}\leq\delta\lambda_n})\to
  0$ (see proof of Lemma~\ref{lem:sumconv}), the right hand side of
  the above inequality may be replaced by $\eps\lambda_n/2$.

  It remains to show that
  \begin{displaymath}
    \gamma_n\P\left(\sum_{i=1}^n\left(\norm{\xi_i}\I{\norm{\xi_i}
        \leq\delta\lambda_n}-\E(\norm{\xi_i}
      \I{\norm{\xi_i}\leq\delta\lambda_n})\right)>\eps\lambda_n/2\right)\to
    0 \text{ as } n\to\infty.
  \end{displaymath}
  Since each summand
  \begin{displaymath}
    \eta_i=\norm{\xi_i}\I{\norm{\xi_i}\leq\delta\lambda_n}
    -\E(\norm{\xi_i}\I{\norm{\xi_i}\leq\delta\lambda_n})
  \end{displaymath}
  is centered and $\E|\eta_i|^p<\infty$ for any $p\geq 2$, the
  Fuk--Nagaev inequality (see e.\,g. \cite[p.~78]{PVV}) yields that
  \begin{displaymath}
    \P\left(\sum_{i=1}^n \eta_i>\frac{\eps\lambda_n}{2}\right) \leq
    c_1 n(\eps\lambda_n)^{-p}\, \E|\eta_1|^p
    +\exp\left\{-\frac{c_2 (\eps\lambda_n)^2}{(n \Var \eta_1)}\right\}
    =I_{3,1}+I_{3,2}
  \end{displaymath}
  for any $p\geq 2$ where $c_1,c_2>0$ are finite constants.
  
  By Corollary~\ref{cor:trunc.moment},
  \begin{displaymath}
    \E |\eta_1|^p\leq
    \E(\norm{\xi_1}\I{\norm{\xi_1}\leq\delta\lambda_n})^p \sim
    c(\delta\lambda_n)^p\P(\norm{\xi_1}>\delta\lambda_n)
    \quad \text{as } n\to\infty
  \end{displaymath}
  for $p>\alpha$.
  For $p>\max\{2,\alpha\}$,
  \begin{displaymath}
    \lim_{\delta\downarrow 0}\limsup_{n\to\infty} \gamma_nI_{3,1}
    \leq c\lim_{\delta\downarrow 0}\delta^p\lim_{n\to\infty}
    \frac{\P(\norm{\xi_1}>\delta\lambda_n)}{\P(\norm{\xi_1}>\lambda_n)}
    =c\lim_{\delta\downarrow 0} \delta^{p-\alpha}=0.
  \end{displaymath}
  To show that $\limsup_{n\to\infty}\gamma_nI_{3,2}= 0$, consider
  these (disjoint) cases:
  \begin{enumerate}
    \renewcommand{\theenumi}{\roman{enumi}}
    \renewcommand{\labelenumi}{(\theenumi)}
  \item If $\alpha\geq2$ and $\Var \norm{\xi_1} < \infty$, then $\Var
    \eta_1 < \infty$ and the convergence follows.
  \item If $0<\alpha<2$, then $\lambda_n^{-2}n
    \Var(\norm{\xi_1}\I{\norm{\xi_1}\leq\delta\lambda_n}) \sim
    cn\P(\norm{\xi}>\delta\lambda_n)$ by
    Corollary~\ref{cor:trunc.moment}, which implies the
    convergence.
  \item If $\alpha=2$ and $\Var \norm{\xi}=\infty$, then
    $\lambda_n^2\P(\norm{\xi}>\lambda_n)$ and
    $\Var(\norm{\xi}\I{\norm{\xi}\leq\delta\lambda_n})$ are both
    slowly varying functions of $\lambda_n$. Because
    $\lambda_n/n\to\infty$, the convergence follows.
  \end{enumerate}
  Thus $\limsup_{n\to\infty}\gamma_nI_{3,2}= 0$. Hence
  \begin{displaymath}
    \limsup_{n\to\infty} \gamma_n\P(S_n\in\lambda_nU) \leq
    \mu(U^{\eps})\to\mu(U)\quad\text{ as } \eps\downarrow 0
  \end{displaymath}
  for any $U$ bounded away from $\zero$, establishing the upper bound.

  For the lower bound, let $U\in\sB$ now denote a $\mu$-continuity set
  bounded away from $\zero$ with nonempty interior. The set
  $U^{-\eps}=((U^c)^{\eps})^c$ is bounded away from
  $\zero$, is a $\mu$-continuity set for all but countably many $\eps$ and
  $\interior(U^{-\eps})$ is nonempty for sufficiently small $\eps$.

  Writing $S_n^{\neq i}=\!\!\sum\limits_{j=1,j\neq i}^n \!\!\!\xi_j$ for
  $i=1,\ldots,n$, we see that
  \begin{align*}
    \P(S_n \in \lambda_n U)&\geq\P(S_n \in \lambda_n U,
    \,\cup_{i=1}^n\{\xi_i\in\lambda_n U^{-\eps}\})\\
    &\geq \P(\cup_{i=1}^n\{d(S_n,\xi_i)<\eps\lambda_n,\xi_i\in\lambda_nU^{-\eps}\})\\
    &\geq \P(\cup_{i=1}^n\{\norm{S_n^{\neq
        i}}<\eps\lambda_n,\xi_i\in\lambda_nU^{-\eps}\})\\
    &\geq n\P(\norm{S_n^{\neq
        1}}<\eps\lambda_n)\P(\xi_1\in\lambda_nU^{-\eps})- {n \choose 2}
    \P(\xi_1\in\lambda_nU^{-\eps})^2\,,\\
    &=J_1-J_2,
  \end{align*}
  where the second inequality holds because
  $d(S_n,\xi_i)<\eps\lambda_n$ and $\xi_i\in\lambda_n U^{-\eps}$ imply that
  $\lambda_n^{-1}S_n\in (U^{-\eps})^{\eps}\subset U$. The third
  inequality is implied by the sub-invariance of the metric.

  By Lemma~\ref{lem:sumconv}, $\P(\norm{S_n^{\neq 1}}<\eps\lambda_n)\to
  1$ as $n\to\infty$ and
  \begin{displaymath}
    \liminf_{n\to\infty}\gamma_nJ_1
    \geq \lim_{n\to\infty}\frac{\P(\xi_1\in\lambda_nU^{-\eps})}
    {\P(\norm{\xi}>\lambda_n)}=\mu(U^{-\eps})
  \end{displaymath}
  which converges to $\mu(U)$ as $\eps\downarrow 0$.
  Finally,
  \begin{displaymath}
    \limsup_{n\to\infty} \gamma_nJ_2\leq c \lim_{n\to\infty}
    n\P(\xi_1\in\lambda_n U^{-\eps}) = 0,
  \end{displaymath}
  which establishes the lower bound and finishes the proof.
\end{proof}

\section{Moderate scaling}
\label{sec:moderate-scaling}

Theorem~\ref{thm:1} requires that the normalising sequence
$\{\lambda_n\}$ grows faster than $n$ in case $\alpha\geq 1$.  If
$\lambda_n$ grows slower than $n$, but faster than
$n^{\max(1/\alpha,1/2)}$, then the large deviation result holds with
an extra additive normalization. Care is required however, since
the addition operation in general cones is not invertible and the
expectation is not well defined.

\begin{theorem}
  \label{thm:2}
  Let $\xi, \xi_1, \ldots$ be \iid regularly varying random elements
  with index $\alpha\geq 1$ and tail measure $\mu$ in a convex cone
  $\K'$ with a homogeneous sub-invariant metric $d$.  Assume that
  $\E\norm{\xi}$ is finite and there exists a sequence $\{A_n, n\geq
  1\}$ of cone elements such that
  \begin{equation}
    \label{cond:conv2} 
    \lambda_n^{-1}\E d(S_n,A_n) \to 0\qquad \text{as } n\to\infty\,,
  \end{equation}
  where $\lambda_n/n^{\max \{1/\alpha,1/2\} + \eta} \to\infty$ for
  some $\eta>0$.  If
  \begin{center}
    (A) $A_n=\neutral$ for all $n$, \quad or \quad (B) the metric $d$ is
    invariant
  \end{center}
  then, with $\gamma_n=(n \P(\norm{\xi} > \lambda_n))^{-1}$,
  \begin{displaymath}
    \gamma_n\P(S_n \in \lambda_n \cdot + A_n) \to \mu(\cdot) \quad\text{ in }
    M_{\zero}\quad \text{as } n\to\infty.
  \end{displaymath}
\end{theorem}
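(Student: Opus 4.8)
The plan is to follow the same upper/lower-bound scheme as in the proof of Theorem~\ref{thm:1}, but to work with the shifted sums $S_n$ relative to the reference point $A_n$, using condition \eqref{cond:conv2} as the substitute for the law of large numbers estimate supplied by Lemma~\ref{lem:sumconv}. The key preliminary observation is that \eqref{cond:conv2} together with Markov's inequality gives $\lambda_n^{-1}d(S_n,A_n)\to 0$ in probability, which is exactly the analogue of Lemma~\ref{lem:sumconv} in the present setting; this is what will let the $D_2$ and $D_3$ type terms be controlled. For a $\mu$-continuity set $U$ bounded away from $\zero$, I would write $\P(S_n\in\lambda_n U+A_n)$ and split it according to whether at least one $\xi_i$ is large (in $\lambda_n U^{\eps}$, suitably shifted) or none is. The single-large-jump term again contributes $\mu(U^{\eps})$ in the limit after multiplication by $\gamma_n$, by \eqref{eq:reg2} and the regular variation of $\xi$; here one must be a little careful that conditioning on $\{\xi_i\in\lambda_n U^{\eps}+A_n\}$ versus $\{\xi_i\in\lambda_n U^{\eps}\}$ makes no difference in the limit, which follows because, by \eqref{cond:conv2} and $\E\norm{\xi}<\infty$, the reference element $A_n$ grows like $n\,\ll\lambda_n$ in norm, so $\lambda_n^{-1}A_n\to\zero$ and the shift is asymptotically negligible on the scale $\lambda_n$.

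The core estimate is showing that the ``no large jump'' contribution, multiplied by $\gamma_n$, vanishes. Here the two hypotheses (A) and (B) enter. Under (B), invariance of $d$ lets one translate freely, so that $d(S_n,A_n+\lambda_n x)=d(S_n\ominus A_n,\lambda_n x)$ in spirit — more precisely, for any $h$ one has $d(S_n,A_n+h)\le d(S_n,A_n)+\norm{h}$-type bounds together with the exact cancellation $d(S_n+h',A_n+h')=d(S_n,A_n)$, so the truncated-and-centered sum $\sum(\norm{\xi_i}\I{\norm{\xi_i}\le\delta\lambda_n}-\E(\cdots))$ can be bounded exactly as in Theorem~\ref{thm:1}, and the Fuk--Nagaev inequality gives the decay with the growth condition $\lambda_n/n^{\max\{1/\alpha,1/2\}+\eta}\to\infty$ supplying the needed margin (the extra $\eta$ compensates for the slow-variation corrections in the moment asymptotics of Corollary~\ref{cor:trunc.moment}). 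Under (A), $A_n=\neutral$, and sub-invariance alone gives $d(S_n,\neutral)=d(S_n,\zero)=\norm{S_n}\le\sum\norm{\xi_i}$, again reducing matters to the scalar truncated sum; condition \eqref{cond:conv2} then just reads $\lambda_n^{-1}\E\norm{S_n}\to0$, consistent with the sub-linear norm and $\E\norm{\xi}<\infty$ being compatible with $\lambda_n\gg n$... wait, here $\lambda_n$ may grow \emph{slower} than $n$, so one genuinely needs the cancellation in the centered sum rather than a crude bound, and the Fuk--Nagaev argument with centering (as in the $D_3$ analysis of Theorem~\ref{thm:1}) is the right tool; the role of \eqref{cond:conv2} is to guarantee that the recentering constant $n\lambda_n^{-1}\E(\norm{\xi}\I{\norm{\xi}\le\delta\lambda_n})$ stays bounded so that the threshold $\eps\lambda_n$ can be replaced by $\eps\lambda_n/2$.

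For the lower bound I would mirror the end of the proof of Theorem~\ref{thm:1}: take a $\mu$-continuity set $U$ with nonempty interior, pass to $U^{-\eps}$, and estimate $\P(S_n\in\lambda_n U+A_n)$ from below by the probability that exactly one $\xi_i$ falls into $\lambda_n U^{-\eps}$ (plus the appropriate shift) while $\norm{S_n^{\neq i}\,\text{-part relative to }A_n}<\eps\lambda_n$. Concretely, write $S_n=\xi_i+S_n^{\neq i}$ and note that $d(\xi_i+S_n^{\neq i},\,\lambda_n x+A_n)\le\eps\lambda_n$ whenever $\xi_i\in\lambda_n U^{-\eps}$ and $d(S_n^{\neq i},A_n)<\eps\lambda_n$ — the latter following from \eqref{cond:conv2} applied with $n-1$ in place of $n$ (or from the probability statement above, since $S_n^{\neq i}$ and $S_{n-1}$ are equidistributed) together with sub-invariance. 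Then $\gamma_n$ times this lower bound converges to $\mu(U^{-\eps})\to\mu(U)$ exactly as before, and the Bonferroni correction term is $O(n\P(\xi_1\in\lambda_n U^{-\eps}+A_n)^2)\cdot\gamma_n\to0$. The main obstacle, I expect, is the bookkeeping around the shift $A_n$ in case (B): one must verify that all the set-manipulations ($U^{\eps}$, $U^{-\eps}$, $\mu$-continuity, boundedness away from $\zero$) survive translation by $A_n$ after rescaling by $\lambda_n^{-1}$, which is exactly where invariance of the metric (rather than mere sub-invariance) is indispensable, and to confirm that $\lambda_n^{-1}A_n\to\zero$ in the cone so that these shifted sets are genuinely comparable to the unshifted ones in the $M_{\zero}$-limit.
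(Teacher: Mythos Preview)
Your overall upper/lower decomposition and your treatment of the single-large-jump term and the lower bound are broadly in line with the paper. One correction: the paper conditions on the \emph{unshifted} events $\{\xi_i\in\lambda_n U^{\eps}\}$ and then, in case~(B), uses invariance to pass from $S_n\in\lambda_n U+A_n,\ \xi_1\notin\lambda_n U^{\eps}$ to $d(S_n,\xi_1+A_n)>\eps\lambda_n$ and thence (again by invariance) to $d(S_n^{\neq 1},A_n)>\eps\lambda_n$. You do not need, and in general cannot have, $\lambda_n^{-1}A_n\to\zero$: in case~(B) one typically has $\norm{A_n}$ of order $n$ while $\lambda_n$ may be $o(n)$.

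The substantive gap is in the no-large-jump part. You propose to bound $d(S_n,A_n)$ by the scalar sum $\sum_i\norm{\xi_i}\I{\norm{\xi_i}\le\delta\lambda_n}$ and apply Fuk--Nagaev after centering, claiming that \eqref{cond:conv2} keeps the recentering constant $n\lambda_n^{-1}\E(\norm{\xi}\I{\norm{\xi}\le\delta\lambda_n})$ bounded. This is false: when $\lambda_n=o(n)$, which the theorem explicitly allows, that constant diverges, and on the event $\{\max_i\norm{\xi_i}\le\delta\lambda_n\}$ the sum $\sum_i\norm{\xi_i}$ is typically of order $n\gg\lambda_n$, so the reduction to scalar norms cannot succeed. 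Condition~\eqref{cond:conv2} controls $\E d(S_n,A_n)$, not $\E\sum_i\norm{\xi_i}$, and these can differ by orders of magnitude (in the max-cone of Example~\ref{ex:max}, $d(S_n,\neutral)=\max_i\xi_i$ while $\sum_i\xi_i\sim n\E\xi$). The paper therefore works directly with $d(S_n^\delta,A_n)$ for a truncated sum $S_n^\delta$: it establishes a semigroup version of a Ledoux--Talagrand lemma (Lemma~\ref{lem:lt}) giving a martingale-difference representation $d(S_n^\delta,A_n)-\E d(S_n^\delta,A_n)=\sum_i d_i$ with $|d_i|\le\norm{\xi_i^\delta}+\E\norm{\xi_i^\delta}\le c\,\delta\lambda_n$, and then applies a martingale Bennett inequality. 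The centering here is around $\E d(S_n^\delta,A_n)$, which \eqref{cond:conv2} (together with a comparison $\E d(S_n^\delta,S_n)=o(\lambda_n)$) forces below $\eps\lambda_n/2$; this is precisely where \eqref{cond:conv2} enters, and it is the key device missing from your plan.
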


The following result known in the setting of Banach spaces (see
\cite[Lemma~6.16]{LT}) extends to general semigroups and will be used
to prove Theorem~\ref{thm:2}.

Let $\zeta_1,\dots,\zeta_n$ be integrable random elements in a
semigroup $\K$ with sub-invariant metric $d$. For $i\leq n$ write
$\sA_i=\sigma(\zeta_1,\dots,\zeta_i)$ and let $\sA_0$ denote the
trivial sigma-algebra. Write $S_n^{\neq i}=\sum_{j=1,j\neq i}^n
\zeta_j$ for $i\leq n$. Let $z$ be any fixed cone element and define
\begin{displaymath}
  d_i=\cE{d(S_n,z)}{\sA_i} - \cE{d(S_n,z)}{\sA_{i-1}}\,,
  \qquad i=1,\dots,n.
\end{displaymath}
Then $d_1,\dots,d_n$ is a real-valued martingale difference sequence
and $\sum_{i=1}^n d_i = d(S_n,z)-\E d(S_n,z)$ almost surely.

\begin{lemma}
  \label{lem:lt}
  Let $\zeta_1,\dots,\zeta_n$ be independent. Then
  \begin{displaymath}
    |d_i| \leq \norm{\zeta_i} + \E\norm{\zeta_i}
  \end{displaymath}
  almost surely for every $i=1,\dots,n$.
\end{lemma}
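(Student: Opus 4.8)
The plan is to adapt the standard martingale-difference bound from \cite[Lemma~6.16]{LT} to the semigroup setting, where the only tools available are the sub-invariance of the metric and the independence of the $\zeta_i$. Fix $i$ and work conditionally. Since $d_i = \cE{d(S_n,z)}{\sA_i} - \cE{d(S_n,z)}{\sA_{i-1}}$ and $\sA_{i-1}\subset\sA_i$, the natural idea is to introduce an independent copy $\zeta_i'$ of $\zeta_i$, independent of everything else, and to use it to ``re-randomize'' the $i$-th coordinate. Writing $S_n'$ for the sum with $\zeta_i$ replaced by $\zeta_i'$, one has, by independence, $\cE{d(S_n,z)}{\sA_{i-1}} = \cE{d(S_n',z)}{\sA_i}$ (the conditioning on $\sA_i$ is harmless for $S_n'$ because $\zeta_i'$ is independent of $\sA_i$, and integrating out $\zeta_i'$ reproduces the $\sA_{i-1}$-conditional expectation of $d(S_n,z)$). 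Hence
\begin{displaymath}
  d_i = \cE{d(S_n,z) - d(S_n',z)}{\sA_i}\,,
\end{displaymath}
and it suffices to bound $|d(S_n,z) - d(S_n',z)|$ by $\norm{\zeta_i} + \norm{\zeta_i'}$ almost surely, since taking conditional expectations and using $\E\norm{\zeta_i'} = \E\norm{\zeta_i}$ then gives the claim.

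The remaining step is the deterministic inequality $|d(x+g+y,z) - d(x+g'+y,z)| \le \norm{g} + \norm{g'}$, where $x$ collects $\zeta_1+\cdots+\zeta_{i-1}$, $y$ collects $\zeta_{i+1}+\cdots+\zeta_n$, $g=\zeta_i$, $g'=\zeta_i'$. By the triangle inequality it is enough to show $d(x+g+y,\, x+y) \le \norm{g}$ for any cone elements; here one has to be a little careful since in a noncommutative semigroup $g$ sits in the middle rather than at the end. Using sub-invariance in the form $d(u+h,u)\le\norm{h}$ is not directly applicable when $h$ is inserted in the middle, so the cleaner route is: by sub-invariance $d(S_n, S_n^{\neq i}) = d(S_n^{\neq i} \text{ with } \zeta_i \text{ inserted},\, S_n^{\neq i})$, and one exploits that sub-invariance was already used elsewhere in the paper in exactly this ``insertion'' form (e.g.\ $d(S_n,\xi_n)\le\norm{S_{n-1}}$ and $\norm{S_n^{\neq i}}$ appearing in the lower-bound argument of Theorem~\ref{thm:1}), which relies on $d(u+h,u)\le\norm{h}$ plus associativity. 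Concretely, write $S_n = (x+g)+y$ and $S_n^{\neq i} = x+y$; applying sub-invariance to the metric between $(x+g)+y$ and $x+y$ requires comparing $x+g$ with $x$ first — indeed $d((x+g)+y,(x)+y)\le d(x+g,x)\le\norm{g}$ by sub-invariance applied twice (once to append $y$, once with $h=g$ appended to $x$). Then
\begin{displaymath}
  |d(S_n,z)-d(S_n',z)| \le d(S_n,S_n^{\neq i}) + d(S_n^{\neq i},S_n') \le \norm{\zeta_i} + \norm{\zeta_i'}\,,
\end{displaymath}
which is what we wanted.

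The main obstacle is the noncommutativity of the semigroup: in the Banach-space proof one freely rewrites $S_n = S_n^{\neq i} + \zeta_i$ with $\zeta_i$ at the end, whereas here $\zeta_i$ is wedged between $\zeta_{i-1}$ and $\zeta_{i+1}$, so one must verify that sub-invariance (which is stated as $d(x+h,x)\le\norm h$, i.e.\ appending on the right) still yields the needed bound after an element is changed in the middle of the sum. The resolution is the associativity of $+$ together with a double application of sub-invariance as above; this is exactly the manipulation already performed implicitly in the proof of Theorem~\ref{thm:1}, so no new structural assumption on $\K$ is needed beyond what the paper already imposes. Everything else — the introduction of the independent copy, the identity $\cE{d(S_n,z)}{\sA_{i-1}} = \cE{d(S_n',z)}{\sA_i}$ via Fubini, and the final conditional-expectation bound — is routine.
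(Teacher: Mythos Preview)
Your core argument is correct but takes a slightly different route from the paper's. The paper does not introduce an independent copy $\zeta_i'$; instead it adds and subtracts $\cE{d(S_n^{\neq i},z)}{\sA_i}=\cE{d(S_n^{\neq i},z)}{\sA_{i-1}}$ (these coincide because $S_n^{\neq i}$ is independent of $\zeta_i$) to obtain
\begin{displaymath}
  d_i=\cE{d(S_n,z)-d(S_n^{\neq i},z)}{\sA_i}-\cE{d(S_n,z)-d(S_n^{\neq i},z)}{\sA_{i-1}}\,,
\end{displaymath}
and then bounds $|d(S_n,z)-d(S_n^{\neq i},z)|\le d(S_n,S_n^{\neq i})\le\norm{\zeta_i}$, so that $|d_i|\le\norm{\zeta_i}+\E\norm{\zeta_i}$. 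Your coupling version arrives at the same bound via $\norm{\zeta_i}+\norm{\zeta_i'}$ and then $\E\norm{\zeta_i'}=\E\norm{\zeta_i}$; both arguments are standard and of comparable length once the extraneous material is removed. The paper's deletion trick is marginally cleaner in that it avoids enlarging the probability space.

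Your long digression on noncommutativity is misplaced, and the proposed fix does not actually work. The convex cones considered here are abelian semigroups (this is the standard convention in the cone literature, implicit in the reference \cite{DMZ}, and used throughout the paper when writing $S_n=S_n^{\neq i}+\zeta_i$), so sub-invariance gives $d(S_n,S_n^{\neq i})\le\norm{\zeta_i}$ directly. More importantly, your claimed step $d((x+g)+y,\,x+y)\le d(x+g,x)$ ``by sub-invariance'' is \emph{not} a consequence of sub-invariance as defined in the paper ($d(u+h,u)\le\norm{h}$): what you are using there is that right translation by $y$ is nonexpansive, \ie $d(a+y,b+y)\le d(a,b)$, which is strictly stronger (it follows from translation-invariance of the metric, not from sub-invariance). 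So in a genuinely noncommutative cone your resolution would still have a gap; fortunately commutativity makes the entire discussion unnecessary.
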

\begin{proof}
  Since $S_n^{\neq i}$ is independent of $\zeta_i$,
  \begin{align*}
    d_i&=\cE{d(S_n,z)}{\sA_i} - \cE{d(S_n,z)}{\sA_{i-1}}\\
    & \quad + \cE{d(S_n^{\neq i},z)}{\sA_{i-1}} - \cE{d(S_n^{\neq i},z)}{\sA_{i}}\\
    &=\cE{d(S_n,z)-d(S_n^{\neq i},z)}{\sA_{i}} -
    \cE{d(S_n,z)-d(S_n^{\neq i},z)}{\sA_{i-1}},
  \end{align*}
  where the equalities hold almost surely. The sub-invariance property
  yields that $d(S_n,z)-d(S_n^{\neq i},z) \leq d(S_n,S_n^{\neq i})\leq
  \norm{\zeta_i}$, giving the result.
\end{proof}

\begin{proof}[Proof of Theorem~\ref{thm:2}]
  We start with an upper bound. Let $\eps>0$ and $U\in\sB$ be a nonempty
  $\mu$-continuity set bounded away from $\zero$. Then
  \begin{align*}
    \begin{split}
      \P(S_n\in\lambda_nU+A_n)&=\P(S_n\in\lambda_nU+A_n,\,
      \cup_{i=1}^n\{\xi_i\in\lambda_n U^{\eps}\})\\
      &\quad+\P(S_n\in\lambda_nU+A_n,\,
      \cap_{i=1}^n\{\xi_i\not\in\lambda_nU^{\eps}\})
    \end{split}\\
    &\leq n\P(\xi_1\in\lambda_nU^{\eps}) + \P(S_n\in\lambda_nU+A_n,\,
    \cap_{i=1}^n\{\xi_i\not\in\lambda_nU^{\eps}\})\\
    &=I_1+I_2.
  \end{align*}
  As in the proof of Theorem~\ref{thm:1}, using (\ref{eq:reg2}) and
  the $\mu$-continuity of $U$,
  \begin{displaymath}
    \mu(U)\leq \lim_{\eps\downarrow 0}\liminf_{n\to\infty}
    \gamma_nI_1\leq \lim_{\eps\downarrow 0}\limsup_{n\to\infty}
    \gamma_nI_1=\mu(U).
  \end{displaymath}
  Now fix $0<\delta\leq \eps/3$ and partition $\Omega$ into
  \begin{displaymath}
    D_1 = \bigcup_{i=1}^n\left\{\norm{\xi_i}>\delta\lambda_n\right\}
    \text{ and }
    D_2 = \bigl\{\max_{i=1,\ldots,n} \norm{\xi_i} \leq \delta\lambda_n\bigr\}.
  \end{displaymath}
  Starting with $D_1$, we see that 
  \begin{align*}
    \P(S_n\in\lambda_nU+A_n,&\, \cap_{k=1}^n\{\xi_k
    \not\in\lambda_nU^{\eps}\},\, D_1)\\ &\leq n\P(S_n\in\lambda_nU+A_n,\,
    \xi_1\not\in\lambda_nU^{\eps},\, \norm{\xi_1}>\delta\lambda_n)\\ &\leq
    n\P(d(\xi_2+\cdots+\xi_n,A_n)>\eps\lambda_n,\, \norm{\xi_1}>\delta\lambda_n).
  \end{align*}
  To justify the last step, define the event
  \begin{displaymath}
    C=\{S_n\in\lambda_nU + A_n,\,\xi_1\not\in\lambda_nU^{\eps}\}.
  \end{displaymath}
  If (A) holds,
  $C=\{\lambda_n^{-1}S_n\in
  U,\,\lambda_n^{-1}\xi_1\not\in U^{\eps}\}$ so that
  $d(S_n,\xi_1)>\eps\lambda_n$ by the homogeneity of $d$. Now
  $d(S_n,\xi_1)\leq d(\xi_2+\cdots+\xi_n,A_n)$ by the sub-invariance.
  
  If (B) holds, $\xi_1\not\in\lambda_nU^{\eps}$ if and only if $\xi_1
  + A_n \not\in\lambda_nU^{\eps} + A_n$ so that
  \begin{displaymath}
    C=\{S_n\in\lambda_nU+A_n,\,\xi_1+A_n\not\in\lambda_nU^{\eps}+A_n\},
  \end{displaymath}
  implying that $d(S_n,\xi_1+A_n)>\eps\lambda_n$. Now apply the
  invariance of $d$ again.

  By independence of the $\xi_i$,
  \begin{align*}
    \gamma_nn\P(d(\xi_2+&\cdots+\xi_n,A_n)>\eps\lambda_n,\,
    \norm{\xi_1}>\delta\lambda_n)\\
    &=\P(\lambda_n^{-1}d(\xi_2+\cdots+\xi_n,A_n)>\eps)\;
    \frac{\P(\norm{\xi_1}>\delta\lambda_n)}{\P(\norm{\xi}>\lambda_n)}.
  \end{align*}
  The fraction in the right-hand side converges to $\delta^{-\alpha}$
  as $n\to\infty$.  The first factor converges to zero, since
  \begin{displaymath}
    \lambda_n^{-1}d(S_n^{\neq 1},A_n)\leq
    \lambda_n^{-1}\norm{\xi_1} + \lambda_n^{-1}d(S_n,A_n)
  \end{displaymath}
  converges to zero in probability because
  $\lambda_n^{-1}\norm{\xi_1}\to 0$ in probability and
  (\ref{cond:conv2}) implies that $\lambda_n^{-1}d(S_n,A_n)\to 0$
  in probability.

  By the same reasoning as for $D_1$, for $D_2$ we get
  \begin{displaymath}
    \P(S_n\in\lambda_nU+A_n,\,
    \cap_{k=1}^n\{\xi_k\not\in\lambda_nU^{\eps}\},\, D_2)\leq
    \P(d(\xi_2+\cdots+\xi_n,A_n)>\eps\lambda_n,D_2).
  \end{displaymath}
  Since $d(S_n^{\neq 1},A_n) \leq d(S_n^{\neq 1},S_n) + d(S_n,A_n)$
  and $d(S_n^{\neq 1},S_n)\leq \norm{\xi_1}$ the sub-invariance
  property yields that
  \begin{align*}
    \P(d(S_n^{\neq 1},A_n)>\eps\lambda_n,\,\max_{i=1,\ldots,n}
    &\norm{\xi_i} \leq \delta\lambda_n)\\ 
    &\leq\P(d(S_n,A_n)>\eps\lambda_n/2,\,\max_{i=1,\ldots,n} \norm{\xi_i}
    \leq \delta\lambda_n)\\
    &\leq\P(d(S_n,A_n)>\eps\lambda_n/2,\,\max_{i=1,\ldots,n} d(\xi_i,A_1)
    \leq 2\delta\lambda_n)
  \end{align*}
  for sufficiently large $n$. The latter inequality holds since
  $d(\xi_i,A_1) \leq \norm{\xi_i} + \norm{A_1}$ and $\delta\lambda_n$
  eventually exceeds $\norm{A_1}$.  Defining
  \begin{displaymath}
    \xi_i^{\delta} =
    \begin{cases}
      \xi_i,&d(\xi_i,A_1)\leq 2\delta\lambda_n,\\
      \neutral,&d(\xi_i,A_1)>2\delta\lambda_n,
    \end{cases}
    \quad i=1,\dots,n\,
  \end{displaymath}
  and $S_n^{\delta} = \xi_1^{\delta} + \cdots + \xi_n^{\delta}$, we
  see that it suffices to show that, for $\delta$ sufficiently small,
  $\gamma_n\P(d(S_n^{\delta}, A_n)>\eps\lambda_n)\to 0$ as
  $n\to\infty$. For this, show that $\E
  d(S_n^{\delta},A_n)\leq\eps\lambda_n/2$ for sufficiently large $n$.

  By the triangle inequality,
  \begin{displaymath}
    \lambda_n^{-1}\E d(S_n^{\delta},A_n)\leq
    \lambda_n^{-1}\E d(S_n^{\delta},S_n)+\lambda_n^{-1}\E d(S_n,A_n),
  \end{displaymath}
  where the last term converges to zero as $n\to\infty$ by
  \eqref{cond:conv2}. To show that also $\lambda_n^{-1}\E
  d(S_n^{\delta},S_n)\to 0$ as $n\to\infty$ for fixed $\delta$, let
  $I(n)=\{1\leq i\leq n: d(\xi_i,A_1)>2\delta\lambda_n\}$, so that
  \begin{displaymath}
    d(S_n^{\delta},S_n) \leq \biggl\|\sum_{i\in I(n)} \xi_i\biggr\|
  \end{displaymath}
  by sub-invariance. 

  If $\alpha=1$, $\lambda_n^{-1}\E\norm{\sum_{i\in I(n)} \xi_i}
  \leq\lambda_n^{-1}n\E\norm{\xi_1}$, which converges to zero.  In
  case $\alpha>1$, we have to show that
  \begin{displaymath}
    \lambda_n^{-1}n\E(\norm{\xi_1} \I{d(\xi_1,A_1)>2\delta\lambda_n})\to
    0\quad \text{as } n\to\infty.
  \end{displaymath}
  Since $d(\xi_1,A_1)\geq \norm{\xi_1}-\norm{A_1}$, it suffices that
  $\lambda_n^{-1}n\E(\norm{\xi_1} \I{\norm{\xi_1}>3\delta\lambda_n})$
  converges to 0, which follows from Corollary~\ref{cor:trunc.moment}
  and the growth rate of $\{\lambda_n\}$.

  Now we assume that $n$ is so large that $\E
  d(S_n^{\delta},A_n)\leq\eps\lambda_n/2$. Then
  \begin{displaymath}
    \P(d(S_n^{\delta}, A_n)>\eps\lambda_n)
    \leq \P(d(S_n^{\delta},A_n)-\E d(S_n^{\delta},A_n) >\eps\lambda_n/2).
  \end{displaymath}
  Note that $d(S_n^{\delta},A_n)-\E d(S_n^{\delta},A_n)$ is almost
  surely equal to $\sum_{i=1}^n d_i$ with
  $d_i=\cE{d(S_n^{\delta},A_n)}{\sA_i} -
  \cE{d(S_n^{\delta},A_n)}{\sA_{i-1}}$. Lemma~\ref{lem:lt} (applied
  for $\zeta_i=\xi_i^{\delta}$ and $z=A_n$) yields that $|d_i|\leq
  \norm{\xi_i^{\delta}}+\E\norm{\xi_i^{\delta}}$, which is almost
  surely smaller than $6\delta\lambda_n$ for sufficiently large $n$.

  A martingale version of Bennett's inequality \cite[Eq.~(6.13)]{LT}
  for the martingale difference sequence $d_1,\dots,d_n$ yields that 
  \begin{align*}
    \P(d(S_n^{\delta},A_n&)-\E d(S_n^{\delta},A_n) >\eps\lambda_n/2)\\
    &\leq 2\exp\left\{\frac{\eps}{12\delta}-\left(\frac{\eps}{12\delta}
        +\frac{b}{144\delta^2\lambda_n^2}\right)
      \log\left(1+\frac{6\delta\eps\lambda_n^2}{b}\right)\right\}
  \end{align*}
  for any $b\geq 4n\E\norm{\xi_1^{\delta}}^2 \geq \sum_{i=1}^n \E d_i^2$.

  In case $\alpha\geq 2$ and $\E\norm{\xi}^2<\infty$, we choose
  $b=4n\E\norm{\xi}^2$. In case $1\leq\alpha\leq 2$ and
  $\E\norm{\xi}^2$ is infinite, we choose $b=8n(\norm{A_1}^2 +
  \E\norm{\xi_1^{\delta}}^2\I{d(\xi_1,A_1)\leq
    2\delta\lambda_n})$. Using the regular variation of $\norm{\xi}$,
  we see that in both cases the growth conditions on $\{\lambda_n\}$
  provide that for all $\delta>0$ small enough,
  \begin{displaymath}
    \gamma_n\P(d(S_n^{\delta},A_n)-\E d(S_n^{\delta},A_n) >\eps\lambda_n/2) \to
    0\quad \text{as }n\to\infty.
  \end{displaymath}
  This establishes the upper bound. 

  For the lower bound, we let $U\in\sB$ again be a $\mu$-continuity set
  bounded away from $\zero$ with nonempty interior and write
  \begin{align*}
    \P(S_n&\in\lambda_nU + A_n) \geq
    \P(S_n\in\lambda_nU+A_n,\cup_{i=1}^n\{\xi_i\in\lambda_nU^{-\eps}\})\\
    &=\P(\cup_{i=1}^n\{\xi_i\in\lambda_nU^{-\eps}\}) -
    \P(S_n\not\in\lambda_nU +
    A_n,\,\cup_{i=1}^n\{\xi_i\in\lambda_nU^{-\eps}\})\\
    &=I_1-I_2.
  \end{align*}
  To show that $\lim_{\eps\downarrow 0}\limsup_{n\to\infty}
  \gamma_nI_2 = 0$, first note that
  \begin{displaymath}
    \P(S_n\not\in\lambda_nU +
    A_n,\,\cup_{i=1}^n\{\xi_i\in\lambda_nU^{-\eps}\}) \leq
    n\P(S_n\not\in\lambda_nU + A_n,\xi_1\in\lambda_nU^{-\eps}).
  \end{displaymath}
  Now $S_n\not\in\lambda_nU +A_n,\xi_1\in\lambda_nU^{-\eps}$ implies
  that $d(S_n^{\neq 1},A_n)>\eps\lambda_n$, by separately considering
  cases (A) and (B) as above. Then the same arguments as in the upper
  bound part of the proof apply.

  It remains to show that
  $\limsup_{n\to\infty}\gamma_nI_1\geq\mu(U^{-\eps})$. By a Bonferroni
  argument,
  \begin{displaymath}
    \gamma_nI_1 \geq
    \frac{\P(\xi_1\in\lambda_nU^{-\eps})}{\P(\norm{\xi_1}>\lambda_n)} -
    \frac{n-1}{2}\frac{\P(\xi_1\in\lambda_nU^{-\eps})^2}
    {\P(\norm{\xi_1}>\lambda_n)}.
  \end{displaymath}
  By choosing $\eps$ sufficiently small (such that $U^{-\eps}$ is a
  $\mu$-continuity set bounded away from $\zero$), we may apply
  \eqref{eq:reg2} and conclude that the $\limsup$ of the positive
  summand converges to $\mu(U^{-\eps})$. The upper limit of the second
  term is zero because $\lambda_n/a_n\to\infty$. Letting $\eps\to 0$
  establishes the lower bound.
\end{proof}

A convex cone with metric $d$ is said to be \emph{isometrically
  embeddable} in a Banach space $(\B,\norm{\cdot}_{\B})$ if there
exists a measurable map $I:\K\to\B$ such that $I(x+y) = I(x) + I(y)$
and $d(x,y) = \norm{I(x)-I(y)}_{\B}$ for all $x,y\in\K$. If the second
distributivity law in $\K$ holds, this is always possible and then $I$
becomes a linear map, i.\,e. $I(ax)=aI(x)$ for all $a>0$.

By \cite[Prop.~9.11]{LT}, a Banach space $\B$ of (Rademacher) type
$p\in[1,2]$ has the property that for every finite sequence
$X_1,\ldots,X_n$ of independent mean zero $p$-integrable Radon random
variables in $\B$,
\begin{equation}
  \label{eq:type}
  \E\norm{\sum_{i=1}^n X_i}^p_{\B}
  \leq (2c)^p \sum_{i=1}^n \E\norm{X_i}^p_{\B}.
\end{equation}
If $\xi$ is a random element in $\K$ with $\E\norm{\xi}<\infty$, then
$I(\xi)$ is a random element in $\B$ which is strongly integrable with
expectation $\E I(\xi)$.

\begin{corollary}
  \label{cor:banach}
  Let $\xi,\xi_1,\ldots$ be \iid regularly varying random elements
  with index $\alpha\geq 1$ and tail measure $\mu$ in a convex cone
  $\K$ which is isometrically embeddable by $I:\K\to\B$ in a separable
  Banach space $\B$ of type $\min(\alpha,2)$. Assume that
  $\E\norm{\xi}^{\min(\alpha,2)}$ is finite and $I(A_n)=n\E I(\xi)$,
  $n\geq1$, for some cone elements $\{A_n,n\geq1\}$. Then
  \eqref{cond:conv2} holds, and so the statement of
  Theorem~\ref{thm:2} follows.
\end{corollary}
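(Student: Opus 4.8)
The plan is to verify the single hypothesis $\lambda_n^{-1}\E d(S_n,A_n)\to 0$ of Theorem~\ref{thm:2} by transferring the computation to the Banach space $\B$ via the isometric embedding $I$, and then invoking Theorem~\ref{thm:2} in case~(A) or (B) — actually in a third regime in which $A_n\neq\neutral$ but $d$ need not be invariant, so one should first note that the proof of Theorem~\ref{thm:2} goes through verbatim whenever \eqref{cond:conv2} holds and $I$ is available, since the embedding supplies the additivity $I(\xi_1+\dots+\xi_n)=\sum I(\xi_i)$ needed to separate $\xi_1$. Concretely, since $I$ is an isometry with $I(S_n)=\sum_{i=1}^n I(\xi_i)$ and $I(A_n)=n\E I(\xi)$, we have
\begin{displaymath}
  \E d(S_n,A_n) = \E\bigl\|I(S_n)-I(A_n)\bigr\|_{\B}
  = \E\Bigl\|\sum_{i=1}^n\bigl(I(\xi_i)-\E I(\xi_i)\bigr)\Bigr\|_{\B}\,.
\end{displaymath}
The summands $X_i=I(\xi_i)-\E I(\xi_i)$ are \iid, mean zero, Radon (separability of $\B$), and $p$-integrable with $p=\min(\alpha,2)$ because $\E\norm{I(\xi)}_\B^p=\E\norm{\xi}^p<\infty$ by the hypothesis $\E\norm{\xi}^{\min(\alpha,2)}<\infty$.

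The second step is the type-$p$ inequality \eqref{eq:type}: it gives
\begin{displaymath}
  \E\Bigl\|\sum_{i=1}^n X_i\Bigr\|_{\B}^p \leq (2c)^p\sum_{i=1}^n\E\norm{X_i}_\B^p
  = (2c)^p\, n\,\E\norm{X_1}_\B^p\,,
\end{displaymath}
so by Jensen's inequality $\E d(S_n,A_n)\leq (2c)\,(n\,\E\norm{X_1}_\B^p)^{1/p}=c'\,n^{1/p}$. Dividing by $\lambda_n$ and using that $\lambda_n/n^{\max\{1/\alpha,1/2\}+\eta}\to\infty$, hence $\lambda_n/n^{1/p}\to\infty$ since $1/p=\max\{1/\alpha,1/2\}$, yields $\lambda_n^{-1}\E d(S_n,A_n)\to 0$, which is exactly \eqref{cond:conv2}.

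The third step is to feed this into Theorem~\ref{thm:2}. One should remark that, although Theorem~\ref{thm:2} as stated assumes (A) or (B), its proof uses that alternative only at the point where the event $C=\{S_n\in\lambda_nU+A_n,\ \xi_1\not\in\lambda_nU^\eps\}$ is shown to force $d(S_n^{\neq1},A_n)>\eps\lambda_n$; in the embedded setting this follows directly, because $I(S_n)\in\lambda_n I(U)+I(A_n)$ and $\lambda_n^{-1}I(\xi_1)\notin$ the $\eps$-neighbourhood of $I(U)$ imply $\norm{I(S_n)-I(\xi_1)}_\B>\eps\lambda_n$, and $\norm{I(S_n)-I(\xi_1)}_\B=\norm{I(S_n^{\neq1})-(I(A_n)-I(\xi_1)-\dots)}$— more cleanly, $d(S_n^{\neq1},A_n)=\norm{\sum_{i\neq1}I(\xi_i)-I(A_n)}_\B\geq\norm{\sum_iI(\xi_i)-I(A_n)}_\B-\norm{I(\xi_1)}_\B=d(S_n,A_n)-\norm{\xi_1}$, whence the same estimate drives the $D_1$ and $D_2$ bounds. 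So the conclusion of Theorem~\ref{thm:2} holds.

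The main obstacle I anticipate is not the inequality chain, which is routine, but the bookkeeping in the third step: Theorem~\ref{thm:2} is phrased under (A) or (B), and here $A_n$ is neither $\neutral$ nor is $d$ assumed invariant, so one must either re-examine its proof to confirm that the embedding hypothesis substitutes for the (A)/(B) dichotomy at every step where it was used, or restate Theorem~\ref{thm:2} with ``(C) $\K$ is isometrically embeddable'' as a third admissible case. The cleanest exposition is to add case~(C) to Theorem~\ref{thm:2} and observe in its proof that the separation-of-$\xi_1$ arguments for $D_1$, $D_2$ and the lower-bound term $I_2$ all go through because $I$ linearizes the addition; then Corollary~\ref{cor:banach} reduces to verifying \eqref{cond:conv2}, i.e.\ to the first two steps above.
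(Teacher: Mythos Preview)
Your verification of \eqref{cond:conv2} in the first two steps is correct and is exactly what the paper does: embed, apply the type-$p$ inequality \eqref{eq:type}, and use Jensen together with the growth rate of $\lambda_n$.

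Your third step, however, is an unnecessary detour. You worry that Theorem~\ref{thm:2} is stated only under (A) or (B), and propose either re-inspecting its proof or adding a case~(C) for the embedded situation. But the embedding hypothesis already forces case~(B): since $I$ is additive and isometric,
\[
  d(x+h,y+h)=\|I(x+h)-I(y+h)\|_\B=\|I(x)+I(h)-I(y)-I(h)\|_\B=\|I(x)-I(y)\|_\B=d(x,y),
\]
so $d$ is translation-invariant. The paper records this in the first line of its proof (``the existence of the embedding implies that the metric $d$ is invariant'') and then invokes Theorem~\ref{thm:2}(B) directly. All of your step~3 calculations are in effect re-deriving this invariance in disguise; once you observe it, nothing further is needed.
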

\begin{proof}
  Note that the existence of the embedding implies that the metric $d$
  is invariant.  If $\tilde{S}_n = I(S_n) - n\E I(\xi)$, then
  \begin{displaymath}
    \E d(S_n,A_n) =\E\norm{\tilde{S}_n}_{\B}.
  \end{displaymath}
  Since $\B$ is a Banach space of type $\min(\alpha,2)$ and each
  summand $X_i=I(\xi_i) - \E I(\xi)$, $i=1,\dots,n$, is centered and Radon
  by the separability of $\B$, it holds that
  \begin{displaymath}
    \E\norm{\tilde{S}_n}_{\B}^{\min(\alpha,2)}
    \leq c n \E\norm{X_1}_{\B}^{\min(\alpha,2)}
  \end{displaymath}
  for some finite $c$ by \eqref{eq:type}. Further
  \begin{displaymath}
    \sup_n n^{-1}\E\norm{\tilde{S}_n}_{\B}^{\min(\alpha,2)} <\infty.
  \end{displaymath}
  By Jensen's inequality, 
  \begin{displaymath}
    \sup_n n^{-1/\min(\alpha,2)+\eta}\E\norm{\tilde{S}_n}_{\B} \to 0
  \end{displaymath}
  for any $\eta>0$, so that \eqref{cond:conv2} holds.
\end{proof}

\section{Examples}\label{sec:examples}

\begin{example}[Half-line with maximum]
  \label{ex:max}
  Let $\K=[0,\infty)$ with the semigroup operation $x+y=\max\{x,y\}$
  and the usual multiplication, so that
  $S_n=\max\{\xi_1,\dots,\xi_n\}$. The metric $d(x,y)=|x-y|$ is
  homogeneous and sub-invariant, and $\neutral=\zero=0$. Regularly
  varying random elements are precisely the nonnegative random
  variables with regularly varying right tail of index $\alpha>0$, so
  their distributions are in the maximum domain of attraction of the
  Fr\'{e}chet distribution and $a_n=n^{1/\alpha}\ell(n)$ for some
  slowly varying function $\ell$. Thus, Theorem~\ref{thm:1} applies.

  Since the metric is not invariant, condition (A) in
  Theorem~\ref{thm:2} is imposed, i.\,e. $A_n=0$ for all $n$.

  Condition \eqref{cond:conv2} requires that $\lambda_n^{-1}\E
  \max\{\xi_1,\dots,\xi_n\}\to 0$. If $\alpha=1$, it follows from the
  law of large numbers, since $\max\{\xi_1,\dots,\xi_n\}/\lambda_n\leq
  (\xi_1+\cdots+\xi_n)/n^{1+\eta}$ and $\E\xi_1<\infty$.  In case
  $\alpha>1$, we can use the fact that $\xi_1^{\alpha-\eps}$ is
  integrable for any $\eps>0$. Since $\alpha-\eps\geq1$ for
  sufficiently small $\eps>0$, Jensen's inequality yields that 
  \begin{displaymath}
    n^{-1-\eta'}(\E\max(\xi_1,\dots,\xi_n))^{\alpha-\eps}\to 0
  \end{displaymath}
  for each $\eta'>0$ and it remains to choose $\eta'$ and $\eps>0$
  such that $(1+\eta')/(\alpha-\eps)\leq 1/\alpha+\eta$.
\end{example}

\begin{example}[Compact sets in $\R^m$]
  The cone of compact sets in the Euclidean space with the Minkowski
  addition metrized by the Hausdorff metric falls into the scheme of
  Theorem~\ref{thm:1}. This case is considered in \cite{MPSa}.
\end{example}

\begin{example}[Compact convex sets in $\R^m$]
  \label{ex:ccs}
  Let $\K$ consist of nonempty compact convex sets in $\R^m$, equipped
  with Minkowski addition and the usual scaling. The support function
  of the set $X$ is denoted by $h_X(u)$ for $u$ from the unit sphere
  $\S^{m-1}$ in $\R^m$. The Hausdorff distance between compact convex
  sets equals the uniform distance between their support functions, so
  it is possible to embed $\K$ into the Banach space of continuous
  functions on the unit sphere. This argument has been used in
  \cite{MPSb} to derive large deviation results for random convex
  compact sets with integrable norm, which is also a special case of
  Theorem~\ref{thm:2}.

  However, it is possible to get rid of condition \eqref{cond:conv2}
  by considering another metric for convex compact sets. For $p\in
  [1,\infty)$, define the distance between convex compact sets $X$ and
  $Y$ using the $L_p$-distance between their support functions as
  \begin{displaymath}
    d_p(X,Y) = \left(\int_{\S^{m-1}} |h_X(u)-h_Y(u)|^p\d u\right)^{1/p},
  \end{displaymath}
  see also \cite{vi85}.  Note that $d_p$ is homogeneous and invariant
  and the support function provides an isometric embedding of $\K$
  into the space $\B=L_p(\S^{m-1})$, Since $\B$ is a separable Banach
  space of type $\min(p,2)$, Corollary~\ref{cor:banach} applies and
  condition \eqref{cond:conv2} is not needed. 
\end{example}

\begin{example}[Functions with argument rescaling]\label{ex:functions}
  Let $\K$ consist of continuous functions $f:\R_+\to\R$ such that
  $\int_{0}^{\infty}xf(x)^2\d x<\infty$, i.\,e. $f\in L_2(R_+,\mu)$, where
  $\mu(\d x) = x \d x$.  The addition is defined
  pointwisely (so $\neutral$ is the zero function) and the
  cone multiplication $\cdot$ is defined as 
  \begin{displaymath}
    (a\cdot f)(x)=f\bigl(\frac{x}{a}\bigr)
  \end{displaymath}
  for $a>0$. The metric
  \begin{displaymath}
    d(f,g)=\left(\int_{0}^{\infty} x(f(x)-g(x))^2 \d x\right)^{1/2}
  \end{displaymath}
  is invariant and homogeneous, and $\K$ is isometrically embeddable
  into the space $\B=L_2(\R_+,\mu)$ of type $2$. Note however that the
  scaling in $\K$ differs from the scaling in $\B$. Thus,
  Theorem~\ref{thm:1} applies and also \eqref{cond:conv2} in
  Theorem~\ref{thm:2} holds, see Corollary~\ref{cor:banach}.

  In order to construct an example of a regularly varying function in
  $\K'$, take any random function $\eta$ from $\K$ with
  $\norm{\eta}=1$ a.\,s. and define
  \begin{displaymath}
    \xi(x)=(\zeta\cdot\eta)(x)=\eta(\zeta^{-1}x),\qquad x\geq0\,,
  \end{displaymath}
  for a non-negative and independent of $\eta$ random variable $\zeta$
  with regularly varying tail with index $\alpha\geq1$. Then
  $\norm{\xi}=\zeta$ and $\norm{\xi}^{-1}\cdot\xi=\eta$ a.\,s. and
  \begin{align*}
    n\P(\norm{\xi}^{-1}\cdot \xi \in B,\,\norm{\xi}>a_nr) &=
    n\P(\eta \in B)\P(\zeta>a_nr)\to \sigma(B) r^{-\alpha}
  \end{align*}
  as $n\to\infty$, where $a_n$ is the normalising sequence associated
  to $\zeta$, so that $\xi$ is indeed regularly varying in $\K$.

  The condition $\E\norm{\xi}<\infty$ in Theorem~\ref{thm:2} means
  that $\E\zeta<\infty$. We define $A_n(x)=n\E\xi(x)$ for all
  $x$. Condition~\eqref{cond:conv2} holds in case
  $\E\zeta^{\min(\alpha,2)}<\infty$ by Corollary~\ref{cor:banach}.
\end{example}

A number of further examples fall into the scope of the proved large
deviation theorems. They include the cone of compact sets with the
union operation and the Hausdorff metric, and the cone of integrable
probability measures with the convolution operation and rescaling of
the argument and the Wasserstein metric, see \cite[Sec.~8]{DMZ} for
these examples in view of the stability properties.

\section{Acknowledgements}
\label{sec:acknowledgements}

The authors are grateful to Zbyn\v{e}k Pawlas for explaining some
details of the proofs of large deviation results for Minkowski sums of
random sets.  The authors benefited from the advice of Sergei Foss
concerning asymptotic behavior of random walks with heavy-tailed
distributions. 


\end{document}